\newtheorem{theorem}{Theorem}[section]
\theoremstyle{definition}
\newtheorem{example}[theorem]{Example}
\newtheorem{proposition}[theorem]{Proposition}
\newtheorem{remark}[theorem]{Remark}
\numberwithin{equation}{section}
\begin{document}

\title{Attractors of  Caputo fractional  differential equations with  triangular vector fields}

\author{Thai Son Doan\footnote{Email: dtson@math.ac.vnInstitute of Mathematics, Vietnam Academy of Science and Technology, 18 Hoang Quoc Viet, Cau Giay, Ha Noi, Viet Nam}
 \quad and \quad Peter E. Kloeden
\footnote{Email:kloeden@math.uni-frankfurt.de, Mathematisches Institut, Universit\"at T\"ubingen, D-72076 T\"ubingen, Germany}
}



\date{\notag}
\maketitle


\begin{abstract}
It is shown that the attractor  of  an autonomous   Caputo fractional   differential equation  of order $\alpha\in(0,1)$  in $\mathbb{R}^d$ whose vector field  has a  certain triangular structure and  satisfies a smooth condition and dissipativity condition is essentially the same as that of the ordinary differential equation with the same vector field.  As an application, we establish several one-parameter bifurcations for scalar fractional differential equations including the saddle-node and the pichfork bifurcations. The proof uses a result of
Cong \&	Tuan \cite{cong} which shows that no two solutions of such a Caputo FDE can intersect in finite time.
\end{abstract}
{\footnotesize
{\textbf{MSC 2020}: 34K05, 34K25, 34K18, 34K12, 34K16}

\noindent 
{\textbf{Keywords}: \emph{Caputo fractional differential equations, triangular structured vector fields, global attractors, bifurcations}}
}

\section{Introduction}
The asymptotic behaviour of Caputo fractional   differential equations (Caputo FDE) in $\mathbb{R}^d$  has attracted much attention in the literature in recent years. It has often been asked if such equations on $\mathbb{R}^d$, generate an   autonomous dynamical system, since that would allow the theory of attractors to applied to them.

Consider an  autonomous   Caputo FDE of order $\alpha\in(0,1)$ in $\mathbb{R}^d$  of the following form
\begin{equation}\label{acfde}
{}^{C}\!D^{\alpha}_{0+} x(t)= g(x(t))
\end{equation}
where $g : \mathbb{R}^d\rightarrow \mathbb{R}^{d}$  is    Lipschitz  continuous and satisfies a growth bound.  The  Caputo FDE \eqref{acfde} with the initial condition $x(0)$ $=$ $x_0$
is the  integral equation
\begin{equation}\label{AIE}
x(t)=x_0+\frac{1}{\Gamma(\alpha)} \int_0^t (t-s)^{\alpha -1}   g(x(s)) ds,
\end{equation}
where
  $\Gamma(\alpha):=\int_0^\infty t^{\alpha-1}e^{-t}dt$ is the Gamma function.

It is easy to show that the  ordinary differential equation (ODE) with the same  vector field, i.e.,
\begin{equation}\label{ode}
\frac{ d}{dt}x(t)= g(x(t)),
\end{equation}
has an attractor when the vector field satisfies a dissipativity condition such as for $a,b>0$
\begin{equation}\label{diss}
\left<x,g(x)\right> \leq a - b \|x\|^2.
\end{equation}
Specifically, by the chain rule  along a solution of \eqref{ode},
$$
\frac{ d}{dt}\|x(t)\|^2 = 2 \left<x(t),g(x(t))\right> \leq 2 a - 2 b\|x(t)\|^2,
$$
which integrates  to give
$$
\|x(t)\|^2 \leq \|x_0\|^2 e^{ - 2b  t} +  \frac{a}{b} \left(1 -e^{ - 2b  t}\right).
$$
Hence the set
$$
\mathcal{B} := \left\{ x  \in \mathbb{R}^d  \, :  \,  \|x\|^2  \leq 1 +  \frac{ a}{b} \right\}
$$
is an absorbing set for the autonomous semi-dynamical system generated by the solution mapping of the ODE \eqref{ode},  which is positive invariant. In particular, this means that
this system has a global attractor,
$$
\mathcal{A}  = \bigcap_{t \geq 0} x(t,\mathcal{B})=\Omega_{\mathcal{B}},
$$
where $x(t,\mathcal{B})=\cup_{\eta\in B} x(t,\eta)$ and $\Omega_{\mathcal{B}} $ is   the omega limit set defined by
$$
\Omega_{\mathcal{B}} := \left\{  y  \in\mathcal{B}  \, :  \,  \exists\, x_{0,n }\in\mathcal{B} ,  t_n \to \infty\, \, \mbox{such that} \,\,  x(t_n,x_{0,n})\to y   \right\}.
$$

\smallskip

By a recent result of   Aguila-Camacho {\em et al.} \cite[Lemma 1]{ACDMG} it is known that a  solution of  the Caputo  FDE   \eqref{acfde} satisfies
$$
{}^{C}\!D^{\alpha}_{0+} \|x(t)\|^2   \leq  2 \left< x(t)   ,  {}^{C}\!D^{\alpha}_{0+}  x(t)  \right>.
$$
Hence, if the vector field $g$ of \eqref{acfde} satisfies the dissipativity condition  \eqref{diss}, then along the solutions of \eqref{acfde}
$$
  {}^{C}\!D^{\alpha}_{0+} \|x(t)\|^2   \leq     2 \left<x(t),g(x(t))\right> \leq 2 a - 2b \|x(t)\|^2
$$
as in the ODE case. Then, by  Wang \& Xiao \cite[Theorem 1]{WX},  the corresponding set $\mathcal{B}$ defined  in terms of the solutions of the Caputo  FDE  is an absorbing set for the solutions of the  Caputo  FDE   \eqref{acfde}. Since this set is compact  in  $\mathbb{R}^d$,   the corresponding omega limit set $\Omega_{\mathcal{B}}$ exists and  is a nonempty compact subset of  $\mathcal{B}$, which attracts all of  the future dynamics of the Caputo  FDE.   It is clear that  $\Omega_{\mathcal{B}}$ contains all of the steady state solutions of  \eqref{acfde}.

In general, $\Omega_{\mathcal{B}}$ cannot be called the attractor of the autonomous  Caputo  FDE   \eqref{acfde}. Recently   Cong \& Tuan  \cite{cong} confirmed the conjecture in \cite{diethelm2008,diethelm2012} by showing  that the solution mapping of   a general autonomous Caputo  FDE  \eqref{acfde} on $\mathbb{R}^d$ does not generate a semi-group on $\mathbb{R}^d$ and, hence,  there is no
autonomous semi-dynamical system on   $\mathbb{R}^d$ corresponding to  \eqref{acfde}.  Consequently,  since solutions cannot be in general concatenated, there may also be omega limit points of solutions starting outside $\mathcal{B}$  that are not in  $\Omega_{\mathcal{B}}$. Also, strictly speaking, mathematically, a general Caputo  FDE   \eqref{acfde}   has    no attractor on  $\mathbb{R}^d$ since this concept is defined in terms of an autonomous semi-dynamical system.

However,  Cong \& Tuan  \cite{cong}   showed that  a Caputo  FDE   \eqref{acfde} with a triangular vector   field  does generate a semi-dynamical system on  $\mathbb{R}^d$.
Recall that a vector field $g : \mathbb{R}^d\rightarrow \mathbb{R}^{d}$ is called  triangular if it has components with the structure $
g_1(x_1),    g_2(x_1,x_2),    \cdots,    g_d(x_1,x_2,\cdots,x_d)$ which covers scalar vector fields as a special case. Our aim in this paper is to use the result in \cite{cong} to investigate the attractor of  Caputo fractional differential equations with a triangular vector field. The result for an attractor of scalar  Caputo fractional differential equations is presented in Section \ref{Section2}. A generalization to  Caputo fractional differential equations with a certain triangular vector field is presented in Section \ref{Section3}. Several examples of bifurcations of scalar Caputo FDEs are presented in Section \ref{Section4}. Section \ref{Section5} is devoted to discussing a potential approach to attractors of general Caputo FDEs by using the existing theory of attractors for semi-dynamical systems on function spaces.
\section{Attractor of scalar Caputo fractional differential equations}\label{Section2}

In this subsection, we consider following scalar fractional differential equation
\begin{equation}\label{ScalarCaputo}
{}^{C}\!D^{\alpha}_{0+} x(t)= g(x(t)),\qquad t\geq 0,
\end{equation}
where $g:\mathbb R\rightarrow \mathbb R$ is a continuously differentiable function and satisfies that
\begin{itemize}
\item [(H1)] (Dissipative condition): There exist $a,b>0$ such that
\[
g(x) x\leq a-bx^2\quad \hbox{ for all } x\in\mathbb R.
\]
\item [(H2)] (Non-degenerate condition):  $g'(x)\not=0$ for all $x\in\mathcal N(g):=\{x\in\mathbb R: g(x)=0\}$.
\end{itemize}
\begin{remark} By (H1), we have $\mathcal N(g)\subseteq [-\sqrt{\frac{a}{b}},\sqrt{\frac{a}{b}}]$. By (H2), $\mathcal N(g)$ has no accumulation point and therefore   $\mathcal N(g)$ has a finite elements and the number of elements is odd. Furthermore, let $\mathcal N(g)=\{x_1,\dots,x_{2k+1}\}$. Then, for all $i=0,1,\dots,k$
\begin{equation}\label{New_Eq1}
g(x)>0\quad \hbox{ for all }x \in (x_{2i},x_{2i+1}),
\end{equation}
and
\begin{equation}\label{New_Eq2}
g(x)<0\quad \hbox{ for all }x \in (x_{2i+1},x_{2i+2}),
\end{equation}
where we use the conventions that $x_0:=-\infty$ and $x_{2k+2}=\infty$.
\end{remark}
The main result of this section is the following theorem about  attractors for a scalar Caputo fractional differential equation \eqref{ScalarCaputo}.
\begin{theorem}[Attractor for scalar Caputo fractional differential equations]\label{MainTheorem1}
Consider system \eqref{ScalarCaputo}. Suppose that the assumptions (H1) and (H2) hold. Then, the following statements hold:
\begin{itemize}
\item [(i)] The global attractor attracting all solutions starting from a bounded set is
\[
\mathcal A=[\min \mathcal N(g), \max \mathcal N(g)].
\]
\item [(ii)] Each solution of \eqref{ScalarCaputo} converges to an element of $\mathcal N(g)$ and the rate of convergence is $t^{-\alpha}$.
\item [(iii)] Each pair of successive values of $\mathcal N(g)$ is a heteroclinic solution of \eqref{ScalarCaputo}
\end{itemize}
\end{theorem}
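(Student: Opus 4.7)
My plan rests on combining Cong--Tuan's non-intersection of solutions \cite{cong} with the a-priori bound provided by the dissipativity (H1). Writing $\mathcal N(g)=\{x_1<\cdots<x_{2k+1}\}$, each constant $x\equiv x_j$ is itself a solution of \eqref{ScalarCaputo}, so non-intersection forces any solution starting in one of the open intervals $(-\infty,x_1)$, $(x_j,x_{j+1})$ for $1\le j\le 2k$, or $(x_{2k+1},\infty)$ to stay there for all $t\ge 0$. On each such interval $g$ has a strict sign by \eqref{New_Eq1}--\eqref{New_Eq2}, and combined with (H1) the orbit is globally bounded. Hypothesis (H2) together with the sign alternation then pins down $g'(x_{2i+1})<0$ (odd-indexed sinks) and $g'(x_{2i})>0$ (even-indexed sources) at every interior equilibrium.

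For part (ii) I fix $x_0\in(x_{2i},x_{2i+1})$, so $g>0$ on the trapping interval. Feeding $g(x(s))\ge 0$ into \eqref{AIE} gives $x(t)>x_0$ for all $t>0$, hence $\liminf_{t\to\infty} x(t)\ge x_0>x_{2i}$. If one had $\limsup_{t\to\infty} x(t)=:M<x_{2i+1}$, then $c:=\min_{[x_0,M]}g>0$, and plugging this back into \eqref{AIE} forces $x(t)\ge x_0+\frac{c(t-T)^\alpha}{\Gamma(\alpha+1)}\to\infty$ for large $t$, contradicting $x(t)\le x_{2i+1}$. Hence $\limsup_{t\to\infty} x(t)=x_{2i+1}$. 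To upgrade this to a full limit with the sharp rate, I would linearise at $x_{2i+1}$: with $y=x-x_{2i+1}$ and $\lambda=-g'(x_{2i+1})>0$, ${}^{C}\!D^{\alpha}_{0+} y=-\lambda y+o(y)$, and the Mittag-Leffler asymptotic $E_\alpha(-\lambda t^\alpha)\sim 1/(\Gamma(1-\alpha)\lambda t^\alpha)$ combined with a Volterra-perturbation argument yields a local Mittag-Leffler stability ball around $x_{2i+1}$. Since $\limsup_{t\to\infty} x(t)=x_{2i+1}$, the orbit enters this ball after some finite time and then converges to $x_{2i+1}$ at rate $t^{-\alpha}$. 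The case $x_0\in(x_{2i+1},x_{2i+2})$ and the two unbounded intervals are symmetric.

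For (iii), pick $\epsilon_n\downarrow 0$ and let $x_n(\cdot)$ be the solution of \eqref{ScalarCaputo} starting at $x_{2i}+\epsilon_n$. Continuous dependence on initial data keeps $x_n$ uniformly close to the constant $x_{2i}$ on time windows $[0,T_n]$ with $T_n\uparrow\infty$, while (ii) gives $x_n(t)\to x_{2i+1}$ as $t\to\infty$. Choosing $t_n$ to be the first time at which $x_n(t_n)$ reaches a fixed intermediate level and extracting a convergent subsequence of the shifted orbits produces, in the limit, a complete trajectory whose backward limit is $x_{2i}$ and whose forward limit is $x_{2i+1}$; the mirror construction at $x_{2i+2}-\epsilon_n$ supplies the heteroclinic for the pair $(x_{2i+1},x_{2i+2})$. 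Statement (i) then assembles from (ii) and (iii): every $\omega$-limit point lies in $\mathcal N(g)\subset[x_1,x_{2k+1}]$, and each heteroclinic orbit, being a continuous curve from one equilibrium to the next, sweeps out its entire subinterval by the one-dimensional intermediate value theorem, so the union is the minimal compact attracting set $[x_1,x_{2k+1}]=[\min\mathcal N(g),\max\mathcal N(g)]$.

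\textbf{Main obstacle.} The step I expect to be hardest is the upgrade from $\limsup x(t)=x_{2i+1}$ to a genuine limit with the sharp rate $t^{-\alpha}$. In the ODE setting the sign of $g$ immediately yields monotonicity and convergence is automatic; the Caputo memory kernel destroys monotonicity, and the effective ``state'' at time $T$ is the full past history of $x$ rather than $x(T)$ alone. Making the local Mittag-Leffler stability rigorous when only the scalar value $x(T)$ is known to be near $x_{2i+1}$---rather than the entire history lying in a functional neighbourhood---will require a careful perturbation of the Volterra equation \eqref{AIE} beyond time $T$, and this is where I expect the bulk of the technical work to lie.
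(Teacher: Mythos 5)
There is a genuine gap, and it sits exactly where you predicted. Your plan for (ii) is to show $\limsup_{t\to\infty}x(t)=x_{2i+1}$ (that part is fine) and then ``enter a local Mittag--Leffler stability ball at some finite time $T$ and converge from there.'' For a Caputo equation this restart is not available: by Cong--Tuan the solution map of \eqref{acfde} is \emph{not} a semigroup on $\mathbb{R}^d$, so knowing $x(T)$ is close to $x_{2i+1}$ tells you nothing usable — the evolution after $T$ depends on the whole history through the kernel $(t-s)^{\alpha-1}$, and $t\mapsto x(T+t)$ does not solve \eqref{ScalarCaputo} with initial value $x(T)$. The paper avoids the restart entirely: after translating the equilibrium to $0$, it proves the \emph{global} one-sided bound $f(y)\ge\gamma\,|y|$ on the whole trapping interval $[\zeta,0]$ (mean value theorem near $0$, a positive minimum of $f(w)/|w|$ on the compact piece away from $0$), rewrites the equation as ${}^{C}\!D^{\alpha}_{0+}y=-\gamma y+h(y)$ with $h\ge0$ on the relevant range, and applies the variation-of-constants formula once, from $t=0$, to get $y(t,\zeta)\in[E_\alpha(-\gamma t^\alpha)\zeta,\,0]$. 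That single global estimate gives convergence \emph{and} the upper rate in one stroke; the lower rate $t^{-\alpha}$ then comes from the matching lower bound of \cite[Theorem 4.1]{cong}. You would need to reproduce something like this global trap to close your argument.

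The construction in (iii) has a second, related problem: extracting a limit of time-shifted orbits $x(\cdot+t_n,\eta_n)$ is the standard semiflow recipe, but time-translates of Caputo solutions are not Caputo solutions (the derivative is based at $0$), so the limiting object need not be a heteroclinic solution of \eqref{ScalarCaputo} in any meaningful sense. The paper instead defines the backward extension pointwise by $x(-t,x(t,\eta))=\eta$, which is well defined by \cite[Theorem 4.8]{cong} for triangular fields, and controls the backward limit by applying the forward estimate $x(t,\zeta)\ge E_\alpha(-\gamma(\zeta-x_{2i})t^\alpha)\zeta$ at the point $\zeta=x(t,\eta)$ and letting $t\to-\infty$. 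Finally, for (i) your assembly from (ii) and (iii) gives only pointwise attraction; uniform attraction of bounded sets is obtained in the paper by the squeeze $x(t,\eta)\in[x(t,\inf B),x(t,\sup B)]$ from non-intersection, which you could add, but the two gaps above are the substantive ones.
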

To prove the above theorem, we need several preparatory results. The following proposition indicates that the set $A=[x_1,x_{2k+1}]$ attracts all solutions of \eqref{ScalarCaputo}. Note that this set $A$  includes all steady states $x_1,\dots,x_{2k+1}$.\\

\noindent
For $\alpha,\beta\in (0,1)$ the Mittag-Leffler function $E_{\alpha,\beta}:\mathbb R\rightarrow \mathbb R$ is defined as
\[
E_{\alpha,\beta}(z):=\sum_{k=0}^{\infty}\frac{z^k}{\Gamma(\alpha k+\beta)},\qquad E_{\alpha}(z):=E_{\alpha,1}(z).
\]
Let $d(A,B)$ denote the distance between two subsets $A$ and $B$ of $\mathbb R$.
\begin{proposition}\label{Scalar_Proposition}
Let $B$ be a bounded set of $\mathbb R$. For any $t\geq 0$, let $x(t,B):=\{x(t,\eta): \eta\in B\}$. Then, there exists $\lambda>0$ such that
\[
d(x(t,B), [x_1,x_{2k+1}] )\leq E_{\alpha}(-\lambda t^{\alpha}) d(B, [x_1,x_{2k+1}])
\quad\hbox{ for all } t\geq 0.
 \]
Consequently,
\[
\lim_{t\to\infty} d(x(t,B), [x_1,x_{2k+1}] )=0.
\]
\end{proposition}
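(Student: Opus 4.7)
The overall strategy is to combine the non-intersection property of Cong--Tuan \cite{cong} (no two distinct solutions of \eqref{ScalarCaputo} meet in finite time) with a scalar Mittag-Leffler comparison inequality for the Caputo derivative. Since $x_1$ and $x_{2k+1}$ are steady states, the constant functions $x\equiv x_1$ and $x\equiv x_{2k+1}$ are solutions; by non-intersection, $x(t,\eta)\in[x_1,x_{2k+1}]$ whenever $\eta\in[x_1,x_{2k+1}]$, while $x(t,\eta)>x_{2k+1}$ (respectively $x(t,\eta)<x_1$) for all $t\geq 0$ whenever $\eta>x_{2k+1}$ (respectively $\eta<x_1$). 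Thus the case $\eta\in[x_1,x_{2k+1}]$ gives distance zero, and it suffices to treat $\eta$ outside the interval. I handle $\eta>x_{2k+1}$ in detail; the other side is symmetric.

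The main geometric input is a one-sided linear bound
\[
g(x)\leq -\lambda_{+}(x-x_{2k+1})\qquad\text{for all } x\geq x_{2k+1},
\]
for some $\lambda_{+}>0$. To obtain it, consider $f(x):=-g(x)/(x-x_{2k+1})$, which is continuous and strictly positive on $(x_{2k+1},\infty)$ by \eqref{New_Eq2} with $i=k$, and extends continuously at $x_{2k+1}$ to $-g'(x_{2k+1})$. The sign change from $g>0$ on $(x_{2k},x_{2k+1})$ to $g<0$ on $(x_{2k+1},\infty)$ forces $g'(x_{2k+1})\leq 0$, and (H2) rules out equality, so $-g'(x_{2k+1})>0$. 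As $x\to\infty$, dissipativity (H1) gives $g(x)/x\leq a/x^{2}-b$, hence $f(x)\to b>0$. Continuity on compact subintervals together with the asymptotic lower bound yields $\lambda_{+}:=\inf_{x\geq x_{2k+1}}f(x)>0$. A symmetric argument on $(-\infty,x_1]$ produces $\lambda_{-}>0$ with $g(x)\geq \lambda_{-}(x_1-x)$, and I set $\lambda:=\min\{\lambda_{+},\lambda_{-}\}$.

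For $\eta>x_{2k+1}$, set $y(t):=x(t,\eta)-x_{2k+1}>0$. The linear bound gives
\[
{}^{C}\!D^{\alpha}_{0+}y(t)=g(x(t,\eta))\leq -\lambda\,y(t),
\]
so the standard scalar Caputo comparison principle (if ${}^{C}\!D^{\alpha}_{0+}y\leq -\lambda y$ with $y\geq 0$, then $y(t)\leq y(0)E_{\alpha}(-\lambda t^{\alpha})$; this is derivable from the integral form \eqref{AIE} by Mittag-Leffler Gronwall-type iteration) yields
\[
d(x(t,\eta),[x_1,x_{2k+1}])\leq d(\eta,[x_1,x_{2k+1}])\,E_{\alpha}(-\lambda t^{\alpha}).
\]
The same inequality holds for $\eta<x_1$ via the symmetric substitution $x_1-x(t,\eta)$, and it is trivial for $\eta\in[x_1,x_{2k+1}]$. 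Taking the supremum over $\eta\in B$ gives the stated bound, and $E_{\alpha}(-\lambda t^{\alpha})\to 0$ as $t\to\infty$ yields the convergence.

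The main obstacle I anticipate is the second step: the local linear bound near $x_{2k+1}$ follows easily from (H2) and the asymptotic one from (H1), but stitching them into a single \emph{global} positive lower bound for $f$ on $[x_{2k+1},\infty)$ relies on $f$ having no zeros there, i.e.\ on $x_{2k+1}$ being the largest element of $\mathcal{N}(g)$, which is precisely \eqref{New_Eq2}. A secondary point is selecting and citing a clean version of the Mittag-Leffler comparison lemma used in the final step, since the classical references treat equality rather than inequality.
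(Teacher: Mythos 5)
Your proof is correct, and its skeleton matches the paper's: reduce to initial points outside $[x_1,x_{2k+1}]$ via the non-intersection property, trap the solution between the steady state and its initial value, bound the vector field below in modulus by a linear function, and conclude Mittag-Leffler decay. Where you genuinely diverge is in how the linear bound is obtained. The paper only establishes $f(y)\geq\gamma|y|$ on the compact interval $[\zeta,0]$ between the (shifted) initial condition and the steady state, so its rate $\gamma$ depends on $\eta$; the uniform $\lambda$ for the set $B$ then comes from the reduction to the two endpoints $\inf B$, $\sup B$. You instead prove a global bound $g(x)\leq-\lambda_{+}(x-x_{2k+1})$ on the whole ray $[x_{2k+1},\infty)$ by splicing the local nondegeneracy (H2) with the asymptotic slope $b$ coming from the dissipativity (H1). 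This buys a decay rate $\lambda$ that is uniform over all initial data, not just over a fixed bounded set, and it lets you pass to the supremum over $\eta\in B$ directly without the endpoint reduction; the paper's version is more local but needs (H1) only to locate $\mathcal N(g)$.

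One small caveat: the comparison step "${}^{C}\!D^{\alpha}_{0+}y\leq-\lambda y$, $y\geq0$ $\Rightarrow$ $y(t)\leq y(0)E_{\alpha}(-\lambda t^{\alpha})$" is true, but your parenthetical justification (a Gronwall-type iteration on the integral equation) is not the mechanism that delivers a \emph{decaying} bound; iterating the kernel in the usual Gronwall fashion produces the growing factor $E_{\alpha}(+\lambda t^{\alpha})$. The correct derivation is exactly the one the paper writes out: set $h(y):={}^{C}\!D^{\alpha}_{0+}y+\lambda y\leq0$, apply the variation of constants formula, and use the positivity of $E_{\alpha,\alpha}$ on the negative half-line to discard the integral term. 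You should either cite a fractional comparison lemma in this form or reproduce that two-line argument; as written, the justification points at the wrong tool even though the claimed inequality and everything built on it are sound.
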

\begin{proof}
Due to the non-intersection of two trajectories of \eqref{ScalarCaputo}, we have \[
x(t,\eta)\in [x(t,\inf B),x(t,\sup B)]\qquad \hbox{for all } \eta\in B.
\]
Then, to conclude the proof it is sufficient to show that for all $\eta\in\mathbb R$ there exists $\gamma>0$ such that
\begin{equation}
d(x(t,\eta), [x_1,x_{2k+1}] )
\leq
E_{\alpha}(-\gamma t^{\alpha}) d(\eta, [x_1,x_{2k+1}]).
\end{equation}
By using the non-intersection of two trajectories of \eqref{ScalarCaputo} and the fact that $x_1,x_{2k+1}$ are steady state solutions, for all  $\eta\in [x_1,x_{2k+1}]$ we have 
\[
d(x(t,\eta), [x_1,x_{2k+1}] )= d(\eta, [x_1,x_{2k+1}])=0,
\]
which implies that  the preceding conclusion obviously holds.  Then, it is enough to deal with the case that $\eta<x_1$ and  use analogous arguments for the case $\eta>x_{2k+1}$. Choose and fix $\eta<x_1$ and to conclude the proof we will show that
\begin{equation}\label{Eq_Aim}
 |x(t,\eta)-x_1|
 \leq
 E_{\alpha}(-\gamma t^{\alpha})  |x_1-\eta| \quad \hbox{for } t\geq 0.
\end{equation}
for some $\gamma>0$. The proof of the preceding fact is divided into two steps:\\

\noindent
\textbf{Step 1}: Consider a new fractional differential equation
\begin{equation}\label{New_Eq1}
{}^{C}\!D^{\alpha}_{0+} y(t)=
f(y(t)),
\end{equation}
where $f:\mathbb R\rightarrow \mathbb R$ is defined as
\begin{equation}\label{Newfunction}
f(x):=g(x+x_1)\qquad\hbox{ for all } x\in\mathbb R.
\end{equation}
Then, we show that $x(t,\eta)=x_1+y(t,\eta-x_1)$, where $y(\cdot,\zeta)$ denotes the solution of \eqref{New_Eq1} satisfying $y(0)=\zeta$. To see that, the integral form of \eqref{ScalarCaputo} yields that
\[
x(t,\eta)=\eta+\frac{1}{\Gamma(\alpha)}\int_0^t (t-s)^{\alpha-1}g(x(s,\eta))\;ds.
\]
Thus,
\begin{eqnarray*}
x(t,\eta)-x_1
&=&
\eta-x_1+\frac{1}{\Gamma(\alpha)}\int_0^t (t-s)^{\alpha-1}g(x(s,\eta)-x_1+x_1)\;ds\\
&=&\eta-x_1+\frac{1}{\Gamma(\alpha)}\int_0^t (t-s)^{\alpha-1}f(x(s,\eta)-x_1)\;ds,
\end{eqnarray*}
which implies that $y(t,x_1-\eta)=x(t,\eta)-x_1$. So, to prove \eqref{Eq_Aim} it is sufficient to show that
\begin{equation}\label{Eq_AimII}
|y(t,\zeta)|
\leq
E_{\alpha}(-\gamma t^{\alpha}) |\zeta| \qquad\hbox{ for  }\zeta:=x-x_1<0.
\end{equation}

\noindent
\textbf{Step 2}:
For this purpose, we first show that there exists $\gamma>0$ such that
\begin{equation}\label{Eq_01}
f(x)\geq \gamma |x|\qquad\hbox{ for all } x\leq 0.
\end{equation}
Indeed, by (H1) we have $g(x)>0$ for $x<x_1$ and therefore by (H2), $g'(x_1)<0$. Equivalently, by \eqref{Newfunction} we have $f(x)>0$ for all $x<0$, $f(0)=0$ and $f'(0)<0$. Hence,  by mean value theorem, there exists $\varepsilon >0$ such that
\[
|f(x)|\geq \frac{|f'(0)|}{2}|x|\qquad\hbox{for all } x\in [-\epsilon,\epsilon].
\]
Hence, if $x\geq -\epsilon$ then \eqref{Eq_01} holds for $\gamma:=\frac{|f'(0)|}{2}$. In the other case, i.e. $x<  -\epsilon$, let
\[
\gamma:=\min\left\{ \frac{|f'(0)|}{2}, \min_{w\in [\zeta,-\epsilon]} \frac{f(w)}{|w|}\right\}.
\]
Then, by strictly positivity of $f$ on $[\zeta,-\epsilon]$ we have $\gamma>0$ and obviously \eqref{Eq_01} also holds for this choice of $\gamma$. So, in both cases there exists $\gamma>0$ satisfying \eqref{Eq_01}. We now rewrite \eqref{New_Eq1} in the following form
\[
{}^{C}\!D^{\alpha}_{0+} y(t)= -\gamma y(t)+h(y(t)),
\]
where $h:\mathbb R\rightarrow \mathbb R$ is defined by
\[
h(y):=f(y)+\gamma y.
\]
On the one hand, by \eqref{Eq_01} we have
\begin{equation}\label{Eq_03}
h(y)\geq 0\qquad\hbox{for all } y\leq 0.
\end{equation}
Thanks to the variation of constants formula (see e.g. \cite[Lemma 3.1]{cong})  we arrive at the following representation of the solution $y(t,\zeta)$ as
\begin{equation}\label{Eq_04}
y(t,\zeta)=E_{\alpha}(-\gamma t^{\alpha})\zeta
+\frac{1}{\Gamma(\alpha)}\int_0^{t}
(t-s)^{\alpha-1} E_{\alpha,\alpha}(-\gamma (t-s)^{\alpha}) h(y(s,\zeta))\;ds.
\end{equation}
By the non-intersection of two solutions of \eqref{New_Eq1} we have $y(t,\zeta) \in [\zeta,0]$ for all $t\geq 0$. This together with \eqref{Eq_03} gives that
\[
h(y(s,\zeta))\geq 0\qquad\hbox{ for all } s\geq 0.
\]
Consequently, by \eqref{Eq_04} and positivity of the function $E_{\alpha,\alpha}$ we arrive at
\[
y(t,\zeta)\in [E_{\alpha}(-\gamma t^{\alpha})\zeta,0]\qquad\hbox{for all } t\geq 0,
\]
which  shows \eqref{Eq_AimII}. Furthermore, since $\lim_{t\to\infty}  E_{\alpha}(-\gamma t^{\alpha})=0$ it follows that $\lim_{t\to\infty} y(t,\zeta)=0$. The proof is complete.
\end{proof}
In the following result, we establish the asymptotic behavior of solutions starting inside the attractor. The idea of the proof of this proposition is quite similar to Proposition \ref{Scalar_Proposition} and we only sketch the main points of the proof.
\begin{proposition}\label{PoitwiseConvergence}
 The following statements hold:
\begin{itemize}
\item [(i)] For $i=0,\dots,k$ and $\eta\in (x_{2i},x_{2i+1})$ there exists $\gamma>0$ such that $|x(t,\eta)-x_{2i+1}|\leq E_{\alpha}(-\gamma t^{\alpha})|\eta-x_{2i+1}|$. Consequently, $\lim_{t\to\infty} x(t,\eta)=x_{2i+1}$.
\item [(ii)] For $i=0,\dots,k$ and $\eta\in (x_{2i+1},x_{2i+2})$  there exists $\gamma>0$ such that $|x(t,\eta)-x_{2i+1}|\leq E_{\alpha}(-\gamma t^{\alpha})|\eta-x_{2i+1}|$. Consequently, $\lim_{t\to\infty} x(t,\eta)=x_{2i+1}$.
\end{itemize}
\end{proposition}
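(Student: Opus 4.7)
The plan is to carry out, on each interval of monotonicity of $g$, the same two-step reduction used in the proof of Proposition~\ref{Scalar_Proposition}, now treating $x_{2i+1}$ (rather than $x_{1}$) as the target equilibrium. The underlying local facts are the following: $x_{2i+1}$ is a steady state; by (H2) together with the sign pattern in the Remark, $g$ crosses from positive to negative at $x_{2i+1}$, so $g'(x_{2i+1})<0$; and, by the non-intersection of trajectories \cite{cong} applied to the steady states $x_{2i}$, $x_{2i+1}$, $x_{2i+2}$, any solution of \eqref{ScalarCaputo} starting in $[x_{2i},x_{2i+1}]$ (respectively $[x_{2i+1},x_{2i+2}]$) remains in that same interval for all $t\geq 0$.

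For part (i), I would translate by setting $f(y):=g(y+x_{2i+1})$, $\zeta:=\eta-x_{2i+1}<0$, and $y(t,\zeta):=x(t,\eta)-x_{2i+1}$; by the same calculation as in Step~1 of Proposition~\ref{Scalar_Proposition}, $y$ satisfies ${}^{C}\!D^{\alpha}_{0+}y=f(y)$ with $f(0)=0$, $f'(0)<0$, $f>0$ on $(x_{2i}-x_{2i+1},0)$, and $y(t,\zeta)\in[\zeta,0]$. Combining the Taylor bound near $0$ coming from $f'(0)<0$ with the strict positivity of $f$ on the compact set $[\zeta,-\varepsilon]$ produces a constant $\gamma>0$ with $f(y)\geq\gamma|y|$ for all $y\in[\zeta,0]$. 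Rewriting the equation as ${}^{C}\!D^{\alpha}_{0+}y=-\gamma y+h(y)$ with $h(y):=f(y)+\gamma y\geq 0$ on $[\zeta,0]$, the variation of constants formula together with the positivity of $E_{\alpha,\alpha}$ forces $y(t,\zeta)\in[E_{\alpha}(-\gamma t^{\alpha})\zeta,0]$, which is exactly the estimate claimed in (i).

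Part (ii) is entirely symmetric. Now $\zeta:=\eta-x_{2i+1}>0$, the trapping property gives $y(t,\zeta)\in[0,\zeta]$, the sign pattern yields $f<0$ on $(0,x_{2i+2}-x_{2i+1})$, while $f'(0)<0$ still produces a linear bound $-f(y)\geq\gamma y$ on $[0,\zeta]$ by the same Taylor plus compactness argument. Writing the equation as ${}^{C}\!D^{\alpha}_{0+}y=-\gamma y+h(y)$ with $h(y)\leq 0$ on $[0,\zeta]$ and using the variation of constants identity together with positivity of $E_{\alpha,\alpha}$ then yields $y(t,\zeta)\in[0,E_{\alpha}(-\gamma t^{\alpha})\zeta]$, giving $|x(t,\eta)-x_{2i+1}|\leq E_{\alpha}(-\gamma t^{\alpha})|\eta-x_{2i+1}|$ as desired.

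The one step that needs genuine attention — and which will be the main (though mild) obstacle — is extracting the linear lower bound $|f(y)|\geq\gamma|y|$ uniformly on the entire trapping interval rather than only near the equilibrium. This is exactly where both halves of hypothesis (H2) are used: the non-vanishing of $g'(x_{2i+1})$ supplies the neighborhood estimate from Taylor, while the isolatedness of $\mathcal N(g)$ ensures that $|f(y)|/|y|$ is strictly positive and continuous on the complementary compact piece, so that its minimum is a legitimate positive $\gamma$. Once this bound is in hand, everything else is mechanical repetition of Step~2 of Proposition~\ref{Scalar_Proposition}.
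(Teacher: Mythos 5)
Your proposal is correct and follows essentially the same route as the paper: translate the equilibrium $x_{2i+1}$ to the origin, use (H2) plus the sign pattern and a compactness argument to obtain a ($\zeta$-dependent) linear bound $|f(y)|\geq\gamma|y|$ on the trapping interval, and then apply the variation of constants formula with the positivity of $E_{\alpha,\alpha}$ exactly as in Step 2 of Proposition \ref{Scalar_Proposition}. The only difference is that you write out part (ii) explicitly with the signs reversed, whereas the paper dismisses it as analogous.
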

\begin{proof}
We only give a proof of (i) and by using analogous arguments we also obtain (ii). Let $i\in \{0,1,\dots,k-1\}$ be arbitrary but fixed. From
\eqref{New_Eq1} and \eqref{New_Eq2}, we have
\begin{equation}\label{New_Eq5}
g(x)>0\quad \hbox{ for all } x\in (x_{2i},x_{2i+1})\quad
\hbox{ and }
g'(x_{2i+1})<0.
\end{equation}
Now, choose and fix an arbitrary $\eta\in (x_{2i},x_{2i+1})$. Consider a new fractional differential equation
\begin{equation}\label{New_Eq3}
{}^{C}\!D^{\alpha}_{0+} y(t)=
f(y(t)),
\end{equation}
where $f:\mathbb R\rightarrow \mathbb R$ is defined as
\[
f(y):=g(y+x_{2i+1})\qquad\hbox{ for all } x\in\mathbb R.
\]
Then, for $y(t,
\zeta)$ denoting the solution of \eqref{New_Eq3} we have $x(t,\eta)=y(t,\eta-{x_{2i+1}})+x_{2i+1}$ for all $t\geq 0$. Then it is sufficient to show that for all $\zeta \in (x_{2i}-x_{2i+1},0)$
\begin{equation}\label{New_Eq6}
|y(t,\zeta)|\leq E_{\alpha}(-\gamma t^{\alpha})|\zeta|\qquad\hbox{ for some } \gamma>0.
\end{equation}
Now, the property \eqref{New_Eq5} is translated into the function $f$ as
\[
f(y)>0\quad \hbox{ for all } y\in (x_{2i}-x_{2i+1},0)\quad
\hbox{ and }
f'(0)<0,
\]
which gives that there exists $\gamma>0$ (depending on $\zeta\in (x_{2i}-x_{2i+1},0)$) such that $f(y)\geq \gamma |y|$ for all $y\in [\zeta,0]$. Thus, by variation of constants formula we have
\[
y(t,\zeta)\geq E_{\alpha}(-\gamma t^{\alpha})\zeta \qquad\hbox{for all } t\geq 0,
\]
proving \eqref{New_Eq6}. The proof is complete.
\end{proof}

Next, we discuss the existence of heteroclinic orbits joining the steady state solutions. Here, we need to discuss how to define the value of solution in the negative time axis. Roughly speaking, we can extend the solution $x(\cdot,\eta)$ in the negative time axis as follows: for any $t\leq 0$ then  $x(t,\eta)$ is the unique value $\zeta\in\mathbb R$ satisfying that $x(-t,\zeta)=\eta$, it means that
\[
x(-t,x(t,\eta))=\eta.
\]
The well-defined property of this way of extension is confirmed by the result in \cite[Theorem 4.8]{cong}.

\begin{proposition}[Heteroclinic trajectory joining the steady states]\label{Heteroclinic}
 The following statements hold:
\begin{itemize}
\item [(i)] For $i=0,\dots,k-1$ and $\eta\in (x_{2i},x_{2i+1})$ the solution $x(t,\eta)$ is a heteroclinic trajectory joining  the steady states $x_{2i}$ and $x_{2i+1}$. More precisely, $\lim_{t\to-\infty} x(t,\eta)=x_{2i}$
 and $\lim_{t\to\infty} x(t,\eta)=x_{2i+1}$.
\item [(ii)] For $i=0,\dots,k$ and $\eta\in (x_{2i+1},x_{2i+2})$ the solution $x(t,\eta)$ is a heteroclinic trajectory joining  the steady states $x_{2i+1}$ and $x_{2i+2}$. More precisely, $\lim_{t\to-\infty} x(t,\eta)=x_{2i+2}$
 and $\lim_{t\to\infty} x(t,\eta)=x_{2i+1}$.
\end{itemize}
\end{proposition}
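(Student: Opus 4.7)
The plan is to handle case~(i); case~(ii) follows by a symmetric argument. Fix $i \in \{0,\dots,k-1\}$ and $\eta \in (x_{2i}, x_{2i+1})$. The forward limit $\lim_{t\to\infty} x(t,\eta) = x_{2i+1}$ is exactly Proposition~\ref{PoitwiseConvergence}(i), so everything reduces to proving $\lim_{t\to\infty} x(-t,\eta) = x_{2i}$.

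Write $T_t(\zeta) := x(t,\zeta)$ and $\phi(t) := x(-t,\eta)$. I would first locate $\phi(t)$ inside $(x_{2i},x_{2i+1})$. Non-intersection together with the steady states at the endpoints makes $T_t$ a strictly increasing injection from $(x_{2i},x_{2i+1})$ into itself; continuous dependence on the initial condition, combined with $T_t(x_{2i}) = x_{2i}$ and $T_t(x_{2i+1}) = x_{2i+1}$, then forces this map to be onto. Hence for every $t \geq 0$ there is a unique $\phi(t) \in (x_{2i}, x_{2i+1})$ with $T_t(\phi(t)) = \eta$, and this $\phi$ is precisely the backward extension guaranteed by \cite[Theorem~4.8]{cong}.

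Next, I would show $\phi$ is strictly decreasing. Since $g > 0$ on $(x_{2i}, x_{2i+1})$, any trajectory starting in this interval is strictly increasing in $t$ while it remains there. For $0 \leq s < t$, the semigroup property available for scalar Caputo FDEs \cite{cong} gives $T_s(T_{t-s}(\phi(t))) = T_t(\phi(t)) = \eta = T_s(\phi(s))$, and injectivity of $T_s$ yields $T_{t-s}(\phi(t)) = \phi(s) > \phi(t)$. Hence $\phi(t) \downarrow \ell$ for some $\ell \in [x_{2i}, x_{2i+1})$. To conclude $\ell = x_{2i}$, suppose instead $\ell > x_{2i}$. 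Then $\phi(t) \geq \ell$ for every $t$, and monotonicity of $T_t$ in the initial datum gives $\eta = T_t(\phi(t)) \geq T_t(\ell)$. Applying Proposition~\ref{PoitwiseConvergence}(i) to $\ell \in (x_{2i}, x_{2i+1})$ sends $T_t(\ell) \to x_{2i+1}$, so $\eta \geq x_{2i+1}$, a contradiction.

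The subtlest step, in my view, is verifying surjectivity of $T_t$ onto $(x_{2i}, x_{2i+1})$, since this is what makes the backward extension globally defined and keeps $\phi(t)$ inside the open interval where the strictly monotone forward dynamics can be exploited. Once this is granted, the rest is a clean monotone-convergence argument combined with the forward convergence supplied by Proposition~\ref{PoitwiseConvergence}.
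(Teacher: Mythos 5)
Your proof is correct in substance, but it takes a genuinely different route from the paper's. The paper's argument is quantitative: after normalizing $x_{2i+1}=0$ it establishes the lower bound $g(\zeta)\geq \gamma(\zeta-x_{2i})|\zeta|$ on $[x_{2i},0]$ using the non-degeneracy of $g'$ at both endpoints, feeds this into the variation-of-constants formula to get the forward estimate $x(t,\zeta)\geq E_{\alpha}(-\gamma(\zeta-x_{2i})t^{\alpha})\zeta$, and then inverts this along the backward extension via the identity $x(-t,x(t,\eta))=\eta$ to force $x(t,\eta)\to x_{2i}$ as $t\to-\infty$. Your argument is order-theoretic and softer: it uses only the semigroup property, order preservation of $T_t$ (non-intersection), the fact that $T_\tau(\zeta)>\zeta$ on the interval, and the already-proved forward convergence $T_t(\ell)\to x_{2i+1}$, closing with a monotone-limit contradiction. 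Your route avoids Mittag--Leffler estimates entirely and is arguably cleaner for the purely qualitative statement; the paper's route yields an explicit decay estimate as a by-product. Your surjectivity argument for $T_t$ on $(x_{2i},x_{2i+1})$ (continuity, strict monotonicity, fixed endpoints) is a legitimate substitute for the citation of \cite[Theorem 4.8]{cong} that the paper uses to justify the backward extension.

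One caveat deserves attention. You justify the inequality $T_{t-s}(\phi(t))>\phi(t)$ by asserting that ``any trajectory starting in this interval is strictly increasing in $t$ while it remains there.'' The paper explicitly warns (in the remark following Theorem \ref{MainTheorem1}) that this monotonicity argument from ODEs does \emph{not} transfer to Caputo FDEs because of the singular kernel, so you should not assert pointwise monotonicity of $t\mapsto x(t,\zeta)$ without proof. Fortunately your argument only needs the weaker fact $x(\tau,\zeta)>\zeta$ for $\tau>0$, which does follow directly: by non-intersection with the constant solutions at $x_{2i}$ and $x_{2i+1}$ the trajectory stays in the interval, so $g(x(s,\zeta))>0$ for all $s$, and the integral representation $x(\tau,\zeta)=\zeta+\frac{1}{\Gamma(\alpha)}\int_0^{\tau}(\tau-s)^{\alpha-1}g(x(s,\zeta))\,ds$ gives $x(\tau,\zeta)>\zeta$. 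Replace the monotonicity claim by this observation and the proof stands.
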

\begin{proof}
We only give a proof for the part (i) and refer an analogous argument for the proof of part (ii). In fact, by Proposition \ref{PoitwiseConvergence} it is only required to prove that
\begin{equation}\label{New_Eq7}
\lim_{t\to-\infty} x(t,\eta)=x_{2i}\qquad\hbox{ for all } \eta\in (x_{2i},x_{2i+1}).
\end{equation}
Analog to the proof of Proposition \ref{Scalar_Proposition} and Proposition \ref{PoitwiseConvergence}(i), we can introduce the new system to have the property that $x_{2i+1}=0$. So, in what follows we can assume additionally that $x_{2i+1}=0$. Choose and fix $\eta\in (x_{2i},0)$. We divide the remaining proof into several steps:\\

\noindent
\textbf{Step 1}: We show that there exists $\gamma>0$ such that
\begin{equation}\label{Step1}
g(\zeta)\geq \gamma (\zeta-x_{2i})|\zeta|\qquad\hbox{ for all } \zeta\in [x_{2i},0].
\end{equation}
To prove this, since $g'(x_{2i})>0>g'(0)$ it follows that there exists $\varepsilon \in (0,-\frac{x_{2i}}{3})$ such that
\[
g(\zeta)\geq \frac{g'(x_{2i})}{2} (\zeta-x_{2i})\qquad\hbox{ for all } \zeta \in (x_{2i},x_{2i}+\epsilon).
\]
and
\[
g(\zeta)\geq \frac{|g'(0)|}{2} |\zeta|\qquad\hbox{ for all } \zeta \in (-\epsilon,0).
\]
Then, \eqref{Step1} holds for
\[
\gamma:=\min\left\{
\frac{g'(x_{2i})}{2|x_{2i}|},\frac{|g'(0)|}{2|\epsilon+x_{2i}|}, \min_{\zeta\in [x_{2i}+\epsilon, -\epsilon]} \frac{g(\zeta)}{(\zeta-x_{2i})|\zeta|}
\right\}.
\]
The positivity of $\gamma$ follows from the fact that $g(\zeta)>0$ for all $\zeta\in [x_{2i}+\epsilon, -\epsilon]$.
\\

\noindent
\textbf{Step 2}: For any $\zeta \in (x_{2i},0)$, we show that
\begin{equation}\label{Step2}
x(t,\zeta)
\geq
E_{\alpha}(-\gamma (\zeta-x_{2i}) t^{\alpha})\zeta\qquad\hbox{ for all } t\geq 0.
\end{equation}
To show this inequality, choose and fix  $\zeta \in (x_{2i},0)$ and let $\widehat\gamma:= \gamma (\zeta-x_{2i})$. Then, we can write the Caputo fractional differential equation \eqref{ScalarCaputo} as
\[
{}^{C}\!D^{\alpha}_{0+} x(t)= -\widehat\gamma x(t) + \left(g(x(t))+\widehat\gamma x(t)\right).
\]
By the variation of constants formula, we have
\begin{eqnarray*}
x(t,\zeta)
&=& E_{\alpha}(-\widehat\gamma t^{\alpha}) \zeta\\[1ex]
&& +\frac{1}{\Gamma(\alpha)}\int_0^t (t-s)^{\alpha-1}E_{\alpha,\alpha} (-\widehat\gamma(t-s)^{\alpha}) (g(x(s,\zeta))+\widehat\gamma x(s,\zeta))\;ds.
\end{eqnarray*}
Since $x(s,\zeta)\geq \zeta$ for all $s\geq 0$ it follows with \eqref{Step2} that
\[
g(x(s,\zeta))+\widehat\gamma x(s,\zeta)\geq \gamma (\zeta-x_{2i}) |x(s,\zeta)|
+
\widehat\gamma x(s,\zeta)
\geq 0.
\]
Thus, $x(t,\zeta)
\geq E_{\alpha}(-\widehat\gamma t^{\alpha}) \zeta$ and \eqref{Step2} is proved.\\

\noindent
\textbf{Step 3}: Let $\eta\in (x_{2i},0)$ be arbitrary. Then,
\[
x(-t,x(t,\eta))=\eta\qquad\hbox{ for all } t<0,
\]
which together with \eqref{Step2} implies that
\begin{eqnarray*}
\eta
&\geq &
E_{\alpha}(-\gamma (x(t,\eta)-x_{2i}) (-t)^{\alpha})x(t,\eta)\quad\hbox{ for all } t<0\\[1ex]
&\geq &
E_{\alpha}(-\gamma (x(t,\eta)-x_{2i}) (-t)^{\alpha})x_{2i}\quad\hbox{ for all } t<0
\end{eqnarray*}
Since $E_{\alpha}(\cdot)$ is a montononically increasing function and
$$
\lim_{t\to -\infty}E_{\alpha}(-\rho(-t)^{\alpha})=0\quad\hbox{ for all } \rho>0
$$
it follows that $\lim_{t\to-\infty} x(t,\eta)=x_{2i}$. The proof is complete.
\end{proof}
We are now in a position to prove the main result of this section.
\begin{proof}[Proof of Theorem
\ref{MainTheorem1}]
The proof of (i) and (iii) are given in Proposition \ref{Scalar_Proposition} and Proposition \ref{Heteroclinic}, respectively. The first statement in (ii) that each solution of \eqref{ScalarCaputo} converges to an element of $\mathcal N(g)$ is given in Proposition \ref{PoitwiseConvergence}. It remains to show the rate of convergence. In fact, by using Proposition \ref{PoitwiseConvergence} for any $\eta$ there exists $\gamma>0$ such that
\begin{equation}\label{Estimate1}
d(x(t,\eta),\mathcal N(g))
\leq
E_{\alpha}(-\gamma t^{\alpha}) d(\eta,\mathcal N(g))\qquad\hbox{ for all } t\geq 0.
\end{equation}
On the other hand, by \cite[Theorem 4.1]{cong} there exists $L>0$ such that
\begin{equation}\label{Estimate2}
d(x(t,\eta),\mathcal N(g))
\geq
E_{\alpha}(-L t^{\alpha}) d(\eta,\mathcal N(g))\qquad\hbox{ for all } t\geq 0.
\end{equation}
Furthermore, for any $\lambda>0$ we have $\lim_{t\to\infty} t^{\alpha} E_{\alpha}(-\lambda t^{\alpha})$ is finite. Then, by using \eqref{Estimate1} and \eqref{Estimate2} the rate of convergence of any solution of \eqref{ScalarCaputo} to the steady states of \eqref{ScalarCaputo} is $t^{-\alpha}$.
\end{proof}
\begin{remark}[Comparison to the proof in the scalar ordinary differential equations]
Consider a scalar ordinary differential equation
\begin{equation}\label{ScalarODE}
\dot x(t)=g(x(t))
\end{equation}
Let $a,b$ be two successive zeros of $g$, i.e. $g(a)=g(b)=0$ and $g(x)\not=0$ for $x\in (a,b)$. Then, by continuity of $g$ either $g(x)>0$ for all $x\in (a,b)$ or either $g(x)<0$ for all $x\in (a,b)$. Then, any solution starting from a value in $(a,b)$ will be either strictly monotonically increasing or strictly monotonically decreasing. Consequently, any solution of \eqref{ScalarODE}  will converges to one of two steady states $a,b$.

\noindent
The above monotonicity argument of ODEs cannot extend to FDEs with the same vector field. The main reason is the appearance of the singular kernel in the integral form
\[
x(t,\eta)=\eta+\frac{1}{\Gamma(\alpha)}\int_0^t (t-s)^{\alpha-1} g(x(s,\eta))\;ds.
\]
\end{remark}
\begin{example}
Two specific  scalar Caputo FDE , i.e., with a vector field  $g$   $:$ $\mathbb{R}^1$  $\rightarrow$ $\mathbb{R}^1$, namely  will be investigated.
  $$
 g(x)=-x,\quad g(x) =   x - x^3.
  $$
  These satisfy a dissipativity condition and have steady state solutions $0$ and $0$, $\pm 1$, respectively.  The ODEs
  \[
  \frac{ d}{dt}x(t)= g(x(t))
  \]
  with these vector fields have global attractors
  $\mathcal A=\{0\}$ and $\mathcal A=[-1,1]$, respectively. Then, the corresponding Caputo FDEs
  \[
{}^{C}\!D^{\alpha}_{0+} x(t)= g(x(t))
  \]
  have the same steady state solutions  and  attractors, see Figure 1.  A major difference is that attraction or repulsion
of the  steady state solutions  is not at an exponential rate in the Caputo case.   Also, in the second example,  the heteroclinic trajectories joining the steady state solutions have the same geometric image in   $\mathbb{R}^{1}$, but different functional representations in the ODE and Caputo systems.
\begin{figure}
\begin{center}
 \includegraphics{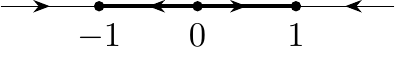}
\caption{Attractor $\mathcal A=[-1,1]$ for  $g(x) =   x - x^3$.}
\end{center}
\end{figure}
\end{example}

 \section{Attractor of Caputo fractional differential equations with triangular vector fields}\label{Section3}
In this section, we first generalize the result in previous section to a special class of Caputo fractional differential equations with triangular vector fields of the following form
\begin{equation}\label{CaputoFDE_Triangularform}
{}^{C}\!D^{\alpha}_{0+} x(t)= g(x(t))=(g_1(x(t)),\dots,g_d(x(t)))^{\mathrm T},
\end{equation}
where for $i=1,\dots,d$ we assume that the function $g_i:\mathbb R^{d}\rightarrow \mathbb R$ is of the following form
\[
g_i(x)=h_i(x_1,\dots,x_{i-1}) f_i(x_i)\qquad \hbox{ for } i=1,\dots,d.
\]
The function $g$ is assumed to be continuously differentiable and to satisfy the following hypothesises

\begin{itemize}
\item [(H1)] (Dissipative condition): There exist $a,b>0$ such that
\[
\langle x, g(x)\rangle \leq a-b \|x\|^2\quad \hbox{ for all } x\in\mathbb R^d.
\]
\item [(H2)] (Non-degenerate condition):  $g'_{ii}(u)\not=0$ for all $u\in\mathcal N(g_{ii}):=\{u\in\mathbb R: g_{ii}(u)=0\}$.
\end{itemize}
Since the structure of vector field in \eqref{CaputoFDE_Triangularform} is of product form it follows with the assume (H1) that for all $i=1,\dots,d$ the function $h_i:\mathbb R^{i-1}\rightarrow \mathbb R$ does not vanishing. Thus, the sign of $g_i$ depends only the scalar function $f_i$. So, applying Theorem \ref{MainTheorem1} to every components leads to the following result.
\begin{theorem}[Attractor for Caputo fractional differential equations of triangular vector fields]\label{MainTheorem2}
Consider system \eqref{CaputoFDE_Triangularform}. Suppose that the assumptions (H1) and (H2) hold. Then, the following statements hold:
\begin{itemize}
\item [(i)] The global attractor attracting all solutions starting from a bounded set is
\[
\mathcal A=[\min \mathcal N(g_{11}), \max \mathcal N(g_{11})]\times \dots\times [\min \mathcal N(g_{dd}), \max \mathcal N(g_{dd})].
\]
\item [(ii)] Each solution of \eqref{ScalarCaputo} converges to an element of the following set
$$
\mathcal N(g)
:=\mathcal N(g_{11})\times\dots\times\mathcal N(g_{dd})
$$ and the rate of convergence is $t^{-\alpha}$.
\end{itemize}
\end{theorem}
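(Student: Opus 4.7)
The plan is to proceed by induction on the component index $i$, exploiting the product structure $g_i(x) = h_i(x_1,\ldots,x_{i-1}) f_i(x_i)$ to reduce the $i$-th coordinate to a scalar Caputo FDE of the type already treated in Theorem \ref{MainTheorem1}. A key observation, already recorded in the paper, is that (H1) forces each $h_i$ to be non-vanishing on $\mathbb{R}^{i-1}$, hence of constant sign; consequently the zero set of $g_i(x)$ viewed as a function of $x_i$ (with the other coordinates fixed) agrees with $\mathcal{N}(g_{ii})$ and inherits the non-degeneracy imposed by (H2).

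For the base case $i=1$ the first equation is autonomous scalar, $h_1$ is a nonzero constant, both (H1) and (H2) transfer directly, and Theorem \ref{MainTheorem1} applies. For the inductive step, suppose that $(x_1(t),\ldots,x_{i-1}(t))$ has been shown to converge to some limit $(x_1^*,\ldots,x_{i-1}^*)\in \mathcal{N}(g_{11})\times\cdots\times\mathcal{N}(g_{(i-1)(i-1)})$ at rate $t^{-\alpha}$. The $i$-th equation then reads
\begin{equation*}
{}^{C}\!D^{\alpha}_{0+} x_i(t) = H_i(t)\, f_i(x_i(t)),\qquad H_i(t):=h_i(x_1(t),\ldots,x_{i-1}(t)),
\end{equation*}
a non-autonomous scalar Caputo FDE whose coefficient $H_i$ is continuous and, because $h_i$ has constant sign and the driving trajectory is bounded, satisfies $|H_i(t)|\ge \delta_i > 0$ for some $\delta_i$; after a cosmetic sign change in $f_i$ we assume $H_i(t)\ge \delta_i > 0$ throughout.

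The main technical step, and the place where I expect the real work, is to transport the arguments of Propositions \ref{Scalar_Proposition}, \ref{PoitwiseConvergence} and \ref{Heteroclinic} from the autonomous to this non-autonomous scalar setting. The proofs there rest on two ingredients I intend to reuse: the non-intersection property for solutions (inherited here from \cite{cong} via the triangular semi-dynamical system, so that the $x_i$-slice with $H_i(t)$ prescribed inherits non-intersection), and a Mittag-Leffler comparison obtained by translating a zero of $f_i$ to the origin and establishing a linear bound $f_i(y)\ge\gamma|y|$ adjacent to that zero. In the present case the same translation yields $H_i(t)f_i(y)\ge \delta_i\gamma |y|$ uniformly in $t$, and rewriting the equation as ${}^{C}\!D^{\alpha}_{0+} y = -\delta_i\gamma\, y + \bigl(H_i(t)f_i(y)+\delta_i\gamma\, y\bigr)$ makes the variation-of-constants formula applicable with the residual integrand of a definite sign, yielding $|y(t,\zeta)|\le E_\alpha(-\delta_i\gamma\, t^\alpha)|\zeta|$ exactly as in Step 2 of the proof of Proposition \ref{Scalar_Proposition}; backward-time extension for the heteroclinic statement is handled by the same device together with \cite[Theorem 4.8]{cong}. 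Combined with the matching lower bound $E_\alpha(-L_i t^\alpha)$ from \cite[Theorem 4.1]{cong} this pins down the rate $t^{-\alpha}$ in each coordinate, giving part (ii); part (i) then follows because a bounded $B\subset\mathbb{R}^d$ projects onto bounded sets at each stage of the induction and the componentwise attractors combine to the asserted product $\mathcal{A} = \prod_{i=1}^d [\min\mathcal{N}(g_{ii}),\max\mathcal{N}(g_{ii})]$.
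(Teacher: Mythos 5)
Your proposal is correct and, at bottom, takes the same route as the paper: use the product structure together with (H1) to conclude that each $h_i$ is non-vanishing and of constant sign, so that the sign of $g_i$ in the $x_i$-direction is governed by $f_i$ alone, and then settle each component by the scalar result. The difference is one of completeness rather than strategy. The paper's entire justification is the sentence preceding the theorem (``applying Theorem \ref{MainTheorem1} to every component''), which silently treats the $i$-th equation as if it were the autonomous scalar equation ${}^{C}\!D^{\alpha}_{0+} x_i = f_i(x_i)$. You correctly observe that for $i\geq 2$ the slice equation ${}^{C}\!D^{\alpha}_{0+} x_i = H_i(t) f_i(x_i)$ with $H_i(t)=h_i(x_1(t),\dots,x_{i-1}(t))$ is genuinely non-autonomous, so Theorem \ref{MainTheorem1} does not literally apply and its proof must be rerun; your uniform bound $H_i(t)\geq\delta_i>0$ (from continuity and non-vanishing of $h_i$ plus boundedness of the driving trajectory supplied by the inductive hypothesis) lets the Mittag--Leffler comparison of Step 2 of Proposition \ref{Scalar_Proposition} go through with $\gamma$ replaced by $\delta_i\gamma$, and the non-intersection needed for the slice does follow from the triangular result of \cite{cong} applied to two solutions sharing their first $i-1$ initial coordinates, whose first $i-1$ components then coincide identically. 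This is exactly the point the paper elides, and your version is the more honest argument. If you write it up, make two things explicit: for part (i) the constant $\delta_i\gamma$ must be chosen uniformly over initial conditions in the bounded set $B$ (possible because all trajectories of the first $i-1$ components emanating from $B$ remain in a common compact set on which $|h_i|$ is bounded below), and the constant function $x_i\equiv x_i^*$ with $f_i(x_i^*)=0$ solves the slice equation for every admissible history, which is what lets it serve as the barrier in the non-intersection step.
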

\begin{example}
Consider the following FDEs with the following vector fields
\begin{eqnarray*}
{}^{C}\!D^{\alpha}_{0+} x(t)
&=& x(t)(1-x(t)),\\[1ex]
{}^{C}\!D^{\alpha}_{0+} y(t)
&=& y(t)(1-y(t)^2) (1+x(t)^2).
\end{eqnarray*}
So, the attractor for the above equation is $\mathcal A=[-1,1]\times [-1,1]$. The asymptotically behavior of solutions are depicted in the Figure 2.

\begin{figure}\label{Fig2}
\begin{center}
\includegraphics[scale=0.9]{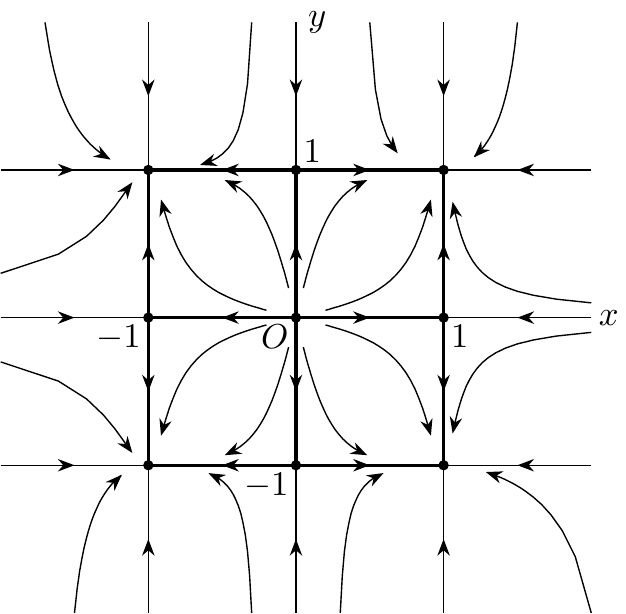}
\caption{Attractor $\mathcal A=[-1,1] \times [-1,1] $ with steady state and  heteroclinic  trajectories for the vector field    $g(x)=x(1 - x^2), f(x,y)$ = $y(1 - y^2)(1 +x^2)$.}
\end{center}
\end{figure}

\end{example}
%
%
%
%
\begin{remark}
It is interesting to know whether Theorem \ref{MainTheorem2} remains true when the vector field
is of a more general form of triangular vector fields, e.g.
\begin{eqnarray*}
{}^{C}\!D^{\alpha}_{0+} x(t)
&=& f(x(t)),\\[1ex]
{}^{C}\!D^{\alpha}_{0+} y(t)
&=& g(x(t),y(t)).
\end{eqnarray*}
Note the result of non-intersection of two solutions is still true for this equation, see \cite{cong}. However, the sign of the vector field $g(x(t),y(t))$ depends on both $x(t)$ and $y(t)$ and we can not use the approach in Section 2 to this problem.
\end{remark}

\section{One-parameter bifurcations for scalar Caputo fractional differential equations}\label{Section4}
Consider a family of scalar Caputo FDEs \eqref{ScalarCaputo}
\begin{equation}\label{Bifur_01}
{}^{C}\!D^{\alpha}_{0+} x(t)=g(\gamma,x),
\end{equation}
where $\gamma$ is a parameter. Let $x_{\gamma}(t,\eta)$ denote the solution of \eqref{Bifur_01} satisfying $x(0)=\eta$. It follows from Theorem \ref{MainTheorem1} that \eqref{Bifur_01} has the same bifurcations as an ODE  with the same vector field. This means, in particular, that the simpler steady states and sign of vector fields for the ODE can be used to determine the bifurcations of the corresponding  Caputo FDE. In what follows, we study the saddle-node and pitchfork bifurcations for fractional differential equations. We refer the readers to \cite[Section 2.1]{Hale} for a corresponding bifurcation analysis of ODE.
\begin{example}[Saddle-node bifurcation] Consider the following family of scalar Caputo FDEs
\begin{equation}\label{Bifur_02}
{}^{C}\!D^{\alpha}_{0+} x(t)=\gamma-x(t)^2.
\end{equation}
Then, the following statements hold:
\begin{itemize}
\item [(i)] For $\gamma<0$, then all solutions of \eqref{Bifur_02} tends to $-\infty$.
\item [(ii)] For $\gamma\geq 0$, then \eqref{Bifur_02} has two steady states $x=-\sqrt{\gamma}$ and $x=\sqrt{\gamma}$ and
\[
\lim_{t\to\infty}x_{\gamma}(t,\eta)=
\left\{
  \begin{array}{ll}
    -\infty, & \hbox{if }  \eta< -\sqrt{\gamma};\\[1ex]
    \sqrt{\gamma}, & \hbox{if } \eta>-\sqrt{\gamma}.
  \end{array}
\right.
\]
\end{itemize}
An analytical proof for (i) comes from the fact that
\[
x_{\gamma}(t,\eta)=\eta+\frac{1}{\Gamma(\alpha)}\int_0^t (\gamma-x_{\gamma}(s,\eta)^2)\;ds\leq \eta+ \frac{\gamma t}{\Gamma(\alpha)}.
\]
For the case $\gamma\geq 0$, an analogous argument as in (i) implies that $\lim_{t\to\infty}x_{\gamma}(t,\eta)=-\infty$ for  $\eta< -\sqrt{\gamma}$. Meanwhile, using Theorem \ref{MainTheorem1} for the restriction of \eqref{Bifur_02} on $(-\sqrt{\gamma},\infty)$ leads to $\lim_{t\to\infty}x_{\gamma}(t,\eta)=\sqrt{\gamma}$ if $\eta> -\sqrt{\gamma}$.
\end{example}
\begin{example}[Pitchfork bifurcation] Consider the following family of scalar Caputo FDEs
\begin{equation}\label{Bifur_03}
{}^{C}\!D^{\alpha}_{0+} x(t)=\gamma x(t)-x(t)^3.
\end{equation}
Then, by Theorem \ref{MainTheorem1} we obtain the following description of the bifurcation of the asymptotical behavior of solutions of \eqref{Bifur_03} on the parameter $\gamma$:
\begin{itemize}
\item [(i)] For $\gamma<0$, then \eqref{Bifur_02} has a steady state $x=0$ attracting all solutions of \eqref{Bifur_02}.
\item [(ii)] For $\gamma\geq 0$, then \eqref{Bifur_02} has three steady states $x=-\sqrt{\gamma}, x=0, x=\sqrt{\gamma}$ and
\[
\lim_{t\to\infty}x_{\gamma}(t,\eta)=
\left\{
  \begin{array}{ll}
    -\sqrt{\gamma}, & \hbox{if }  \eta< 0;\\[1ex]
    \sqrt{\gamma}, & \hbox{if } \eta>0.
  \end{array}
\right.
\]
\end{itemize}
\end{example}
\section{Attractors of Caputo semi-dynamical systems: the general case}\label{Section5}
Doan \& Kloeden \cite{doanklo} showed recently that  a  general autonomous Caputo fractional   differential equation
\begin{equation}\label{GeneralFDE}
{}^{C}\!D^{\alpha}_{0+} x(t)
= g(x(t)),\qquad \hbox{ where } g:\mathbb R^d\rightarrow \mathbb R^d,
\end{equation}
generates a semi-dynamical system on the  function space   $\mathfrak{C}$   of continuous functions $f:\mathbb{R}^+\rightarrow \mathbb{R}^d$   with the topology uniform convergence on compact subsets.  This topology is induced by the  metric
\[
\rho(f,h):=\sum_{n=1}^{\infty}\frac{1}{2^n} \rho_n(f,h),\quad \hbox{where }
\rho_n(f,h):=\frac{\sup_{t\in[0,n]}\|f(t)-h(t)\|}{1+\sup_{t\in[0,n]}\|f(t)-h(t)\|}.
\]
Define the operators $T_\tau$ $:$ $\mathfrak{C}$ $\rightarrow$ $\mathfrak{C}$, $\tau$ $\in$ $\mathbb{R}^+$,  by
\begin{equation}\label{Semigroup}
(T_{\tau} f)(\theta) =f(\tau + \theta)+   \frac{1}{\Gamma(\alpha)} \int_0^{\tau} (\tau+\theta-s)^{\alpha -1}  g(x_f(s))\;ds,  \qquad \theta \in \mathbb{R}^+,
\end{equation}
where $x_f$ is a solution of the  singular  Volterra integral equation   for this $f$, i.e.,
\begin{equation}\label{SVIE}
x_f(t)=f(t)+  \frac{1}{\Gamma(\alpha)} \int_0^t (t-s)^{\alpha-1}  g(x_f(s))\;ds.
\end{equation}
It was shown by Doan \& Kloeden \cite{doanklo} that the operators $T_\tau$, $\tau$ $\in$ $\mathbb{R}^+$, form a semi-group  on the space $\mathfrak{C}$.
(The proof in  \cite{doanklo}  follows  Chapter XI,  pages 178-179,  in Sell \cite{sell} closely). This semi-group  represents the  Caputo FDE \eqref{GeneralFDE}
as an autonomous semi-dynamical system on the space $\mathfrak{C}$.

The theory of autonomous semi-dynamical systems (see e.g.,  \cite{KR}) can be applied to the Caputo semi-group defined above.
 \begin{theorem}\label{thmattr}
	Suppose that  the    semi-dynamical system   $\{T_\tau, \tau \in \mathbb{R}^+\}$ on the space $\mathfrak{C}$ has a closed and bounded positively invariant absorbing set
	$\mathfrak{B}$ in $\mathfrak{C}$ and is  asymptotically compact. Then  the semi-dynamical system $\{T_\tau, \tau \in \mathbb{R}^+\}$ has a global attractor given by
	$$
	\mathfrak{A} = \bigcap_{t \geq 0} T_t(\mathfrak{B}).
	$$
\end{theorem}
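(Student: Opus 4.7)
The plan is to follow the standard route for existence of global attractors in semi-dynamical systems (as presented, e.g., in \cite{KR}), specialized to the Caputo semi-group $\{T_\tau\}$ on $(\mathfrak{C},\rho)$. I would identify $\mathfrak{A}$ with the omega-limit set of the absorbing set, namely
\[
\mathfrak{A}=\omega(\mathfrak{B})=\bigcap_{s\geq 0}\overline{\bigcup_{t\geq s}T_t(\mathfrak{B})},
\]
and note that because $\mathfrak{B}$ is positively invariant the family $\{T_t(\mathfrak{B})\}_{t\geq 0}$ is non-increasing, so this coincides with $\bigcap_{t\geq 0}\overline{T_t(\mathfrak{B})}$ and, up to taking closures, with the expression in the statement. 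It will be convenient to use the sequential characterization: $f\in\mathfrak{A}$ iff there exist $f_n\in\mathfrak{B}$ and $t_n\to\infty$ with $T_{t_n}f_n\to f$ in $\rho$.

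First I would verify that $\mathfrak{A}$ is non-empty and $\rho$-compact. Non-emptiness and precompactness both come directly from asymptotic compactness: any sequence of the form $T_{t_n}f_n$ with $f_n\in\mathfrak{B}$ and $t_n\to\infty$ has a convergent subsequence, whose limit belongs to $\mathfrak{A}$ by construction. Closedness of $\mathfrak{A}$ is automatic as an intersection of closed sets (using $\overline{T_t(\mathfrak{B})}$), and $\mathfrak{A}\subset\mathfrak{B}$ because $\mathfrak{B}$ is closed and positively invariant.

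Next I would establish invariance $T_\tau\mathfrak{A}=\mathfrak{A}$ for every $\tau\geq 0$. The inclusion $T_\tau\mathfrak{A}\subset\mathfrak{A}$ is the easy direction: if $T_{t_n}f_n\to f$ with $f_n\in\mathfrak{B}$, then continuity of $T_\tau$ on $\mathfrak{C}$ (established in \cite{doanklo}) gives $T_{t_n+\tau}f_n=T_\tau T_{t_n}f_n\to T_\tau f$, and $t_n+\tau\to\infty$, so $T_\tau f\in\mathfrak{A}$. The reverse inclusion $\mathfrak{A}\subset T_\tau\mathfrak{A}$ is the main obstacle: given $f\in\mathfrak{A}$ with $T_{t_n}f_n\to f$, one considers the sequence $g_n:=T_{t_n-\tau}f_n$ for $t_n\geq\tau$. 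These need not lie in $\mathfrak{B}$ a priori for small $n$, but eventually they do by the positive invariance of $\mathfrak{B}$, and the sequence $\{g_n\}$ is of precisely the form required by asymptotic compactness, so a subsequence converges to some $g\in\mathfrak{A}$. Continuity of $T_\tau$ then yields $T_\tau g=\lim T_\tau g_n=\lim T_{t_n}f_n=f$.

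Finally I would show $\mathfrak{A}$ attracts $\mathfrak{B}$, whence, by the absorbing property, every bounded set of $\mathfrak{C}$. Arguing by contradiction, if there were $\varepsilon>0$, $f_n\in\mathfrak{B}$ and $t_n\to\infty$ with $\mathrm{dist}_\rho(T_{t_n}f_n,\mathfrak{A})\geq\varepsilon$, then asymptotic compactness would produce a subsequence $T_{t_{n_k}}f_{n_k}\to h$ with $h\in\mathfrak{A}$, contradicting the lower bound. Combined with the fact that $\mathfrak{B}$ absorbs every bounded set in finite time, $\mathfrak{A}$ is thus a global attractor; uniqueness among compact invariant attracting sets is standard. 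The delicate step throughout is the backward-in-time reconstruction used for invariance, which is exactly where asymptotic compactness (and not merely compactness of $\mathfrak{B}$, which we do not have in the infinite-dimensional space $\mathfrak{C}$) is essential.
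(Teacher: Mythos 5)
Your proof is correct and is precisely the standard existence argument for global attractors of asymptotically compact semi-dynamical systems that the paper itself does not write out, but simply invokes by citing \cite{KR} before stating Theorem \ref{thmattr}. Your observation that the representation $\bigcap_{t \geq 0} T_t(\mathfrak{B})$ should in general be read as the omega-limit set $\bigcap_{s\geq 0}\overline{\bigcup_{t\geq s}T_t(\mathfrak{B})}$ (i.e.\ with closures) is a legitimate point of care that the paper's bare statement glosses over.
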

The solution $x(t,x_0)$  of the  autonomous  Caputo FDE  \eqref{GeneralFDE} on  $\mathbb{R}^d$   corresponds to a  constant function $f_0(t)$ $\equiv$ $x_0$ and
$$
x(t,x_0)  \equiv (T_t f_0)(0).
$$

Thus, when the semi-group    $\{T_\tau, \tau \in \mathbb{R}^+\}$ has an attractor  $\mathfrak{A}$ $\subset$   $\mathfrak{C}$,   then   an  omega limit point $x$ $\in$ $\mathbb{R}^d$  of   trajectories of the Caputo FDE  satisfies $x$ $=$ $f(0)$ for some   function $f$  $\in$   $\mathfrak{A}$. In particular, if $g(x^*)$ $=$ $0$, then $f^*$  $\in$   $\mathfrak{A}$ for the constant function $f^*(t)$   $\equiv$ $x^*$, i.e.,  $x^*$ is a steady  state solution of the system. But   there may be   functions $f^*$  $\in$   $\mathfrak{A}$  that are not constant functions, so the strict inclusion,  $\Omega_{\mathcal{B}}\subsetneq\mathfrak{A}(0)$ usually holds, where $\Omega_{\mathcal{B}}$  the     omega limit  point set discussed in the introduction section and $\mathfrak{A}(0)$  is the set of values  in  $\mathbb{R}^d$ of the functions in $\mathfrak{A}$ when evaluated at $t$ $=$ $0$.

The application of Theorem \ref{thmattr} requires  determining an absorbing set  $\mathfrak{B}$ in $\mathfrak{C}$  and showing that the semi-dynamical system   $\{T_\tau, \tau \in \mathbb{R}^+\}$ is asymptotically compact in some sense. This will be investigated in another paper.



\end{document}